\newtheorem{lemma}{Lemma}[section]
\newtheorem{theorem}[lemma]{Theorem}
\newtheorem{definition}[lemma]{Definition}
\author{Henry Towsner}
\ead{hpt@math.ucla.edu}
\title{A Combinatorial Proof of the Dense Hindman's Theorem}
\address{Department of Mathematics, UCLA}
\date{\today}
\begin{document}

\begin{abstract}
  The Dense Hindman's Theorem states that, in any finite coloring of the natural numbers, one may find a single color and a ``dense'' set $B_1$, for each $b_1\in B_1$ a ``dense'' set $B_2^{b_1}$ (depending on $b_1$), for each $b_2\in B_2^{b_1}$ a ``dense'' set $B_3^{b_1,b_2}$ (depending on $b_1,b_2$), and so on, such that for any such sequence of $b_i$, all finite sums belong to the chosen color.  (Here density is often taken to be ``piecewise syndetic'', but the proof is unchanged for any notion of density satisfying certain properties.)  This theorem is an example of a combinatorial statement for which the only known proof requires the use of ultrafilters or a similar infinitary formalism.  Here we give a direct combinatorial proof of the theorem.
\end{abstract}

\begin{keyword}
  Hindman's Theorem
\end{keyword}

\maketitle

\section{Introduction}
Hindman's Theorem states that, in any finite coloring of the natural numbers, some color contains an infinite set and the sums of all non-empty finite subsets.  Hindman's original proof \cite{hindman74} is quite complicated; fortunately, there are both simpler combinatorial arguments \cite{baumgartner74,towsner12:_simpl_proof_some_diffic_examp_hindm_theor} and an elegant proof based on the topology of ultrafilters (see, for instance, \cite{comfort77}).

Strikingly, the ultrafilter argument gives, sometimes with little additional work, various strengthenings of the theorem for which combinatorial proofs are either much harder, or not known to exist.  (\cite{hindman98} gives a thorough exploration of many uses of ultrafilters in this context.)  One such strengthening is the following:
\begin{theorem}
  Let $\mathbb{N}=A_1\cup\cdots\cup A_r$.  There are some $i\leq r$ and a collection $\mathcal{T}$ of finite sets of natural numbers such that:
  \begin{itemize}
  \item $\emptyset\in\mathcal{T}$
  \item If $F\in\mathcal{T}$, $\{n\mid F\cup\{n\}\in\mathcal{T}\}$ is ``dense''
  \item If $F\in\mathcal{T}$ then for each non-empty finite $S\subseteq F$, $\sum_{n\in S}n\in A_i$
  \end{itemize}
  \label{promise}
\end{theorem}
Here ``dense'' can be any property satisfying certain conditions which will be described below.

In this paper we give the first combinatorial proof of this theorem, modeled on Baumgartner's proof of the ordinary Hindman's Theorem.  The key idea is the use of approximate ultrafilters, as introduced by Hirst \cite{hirst04}---countable collections of sets of natural numbers which nonetheless contain enough information to complete the proof.  The proof here is modeled on our related proof of the ordinary Hindman's Theorem \cite{towsner:MR2791353}\footnote{Indeed, we originally found a proof quite similar to that one, and only subsequently found the proof in the style of Baumgartner which we present here.  This proof is slightly more elegant, and we hope that it will shed some light on the relationship between Baumgartner's proof of the ordinary Hindman's Theorem and the ultrafilter proof.}.

The reverse mathematical strength of even the ordinary Hindman's Theorem is open; bounds are given in \cite{blass87}, and the gap between the lower and upper bounds on reverse mathematical strength there has not been improved.  The proof given here is entirely within the bounds of second order arithmetic, but well above their upper bounds; no lower bound for the Dense Hindman's Theorem is known besides the obvious one, that any lower bound for the ordinary Hindman's Theorem must also bound the Dense Hindman's Theorem.

We thank Mathias Beiglb\"ock for bringing this question to our attention, and for many discussions about the mathematics around Hindman's Theorem.  We also thank the referees for their many helpful suggestions.

\section{General Definitions}
In this paper we will work with the natural numbers $\mathbb{N}$, which we will understand as including $0$.  However everything we say would work equally well with any countable abelian semigroup with identity.

Throughout this paper, variables denoted by lowercase letters will typically be natural numbers.  Variables denoted by upppercase letters will be subsets of $\mathbb{N}$, and variables denoted by calligraphic letters ($\mathcal{U}$, $\mathcal{F}$, etc.) will be sets of sets from $\mathbb{N}$.  In fact, since all sets of sets appearing in this paper are countable, it would cause no harm to code them using sets of natural numbers.  The one exception is the property $\mathfrak{P}$, which represents the set of sets of natural numbers satisfying some shift-invariant divisible property, such as the infinite sets, the piecewise syndetic sets, or the sets of positive upper Banach density.

We write $X-n$ for $\{x\mid x+n\in X\}$.

\begin{definition}
  Let $\mathfrak{P}$ be a collection of sets from $\mathbb{N}$ such that\footnote{To keep our promise that the proof goes through in second order arithmetic, we should insist that $\mathfrak{P}$ be given by some arithmetic formula; this includes all the examples given.}:
  \begin{itemize}
  \item $\mathbb{N}\in\mathfrak{P}$
  \item $\emptyset\not\in\mathfrak{P}$
  \item If $X\subseteq Y$ and $X\in\mathfrak{P}$ then $Y\in\mathfrak{P}$ (\emph{upwards closure})
  \item If $X_0\cup X_1=X$ and $X\in\mathfrak{P}$ then either $X_0\in\mathfrak{P}$ or $X_1\in\mathfrak{P}$ (\emph{partition regularity})
  \item For any $X$ and any $n$, $X\in\mathfrak{P}$ iff $X-n\in\mathfrak{P}$ (\emph{shift invariance})
  \end{itemize}

\end{definition}
Properties satisfying all but the shift invariance condition are called \emph{divisible} \cite{glasner80}.

Two natural examples of such properties $\mathfrak{P}$ are:
\begin{itemize}
\item $\mathfrak{P}$ is the collection of infinite sets
\item $\mathfrak{P}$ is the collection of sets $X$ such that $\sum_{x\in X}1/x=\infty$
\end{itemize}

A more interesting example is piecewise syndeticity:
\begin{definition}
 $X$ is \emph{piecewise syndetic} if there is an $n$ such that for every $m$, there is an interval $I$ with $|I|>m$ so that for each $x\in I$, $[x,x+n]\cap X$ is non-empty.
\end{definition}
See \cite{bergelson:MR1743799} for various properties of piecewise syndetic sets.

Another interesting example is positive upper Banach density:
\begin{definition}
$X$ has \emph{positive upper Banach density} if there are an $\epsilon>0$ and, for every $n$, there is an interval $I$ with $|I|>n$ such that $\frac{|I\cap X|}{|I|}>\epsilon$.
\end{definition}
See \cite{bergelson:MR1411215,jin:MR1869316,jin:MR1866042} for various properties of sets with positive upper Banach density.

The collection of piecewise syndetic sets and the collection of sets of positive upper Banach density are both valid choices for the collection $\mathfrak{P}$.

\begin{definition}
  We say a collection $\mathcal{U}$ of sets of natural numbers has the $\mathfrak{P}$-finite intersection property ($\mathfrak{P}$-fip) if for every finite collection $\mathcal{F}\subseteq\mathcal{U}$, 
\[\bigcap_{S\in \mathcal{F}}S\in\mathfrak{P}.\]

Let $\mathcal{U}$ be a countable collection of sets of natural numbers\footnote{None of our arguments would change if uncountable collections---say, true ultrafilters---are allowed.  However we wish to emphasize that none of our arguments will require more than countable collections}.  We write $\mathcal{U}^{fil}$ for the filter generated by $\mathcal{U}$, so $X\in\mathcal{U}^{fil}$ if there is a finite $\mathcal{F}\subseteq\mathcal{U}$ such that $\bigcap_{S\in\mathcal{F}}S\subseteq X$.
  
  We say $\mathcal{U}$ is a $\mathfrak{P}$-semigroup if $\mathcal{U}$ satisfies $\mathfrak{P}$-fip and whenever $X\in\mathcal{U}$, there is a $Y\in\mathcal{U}^{fil}$ such that $X-n\in\mathcal{U}^{fil}$ for each $n\in Y$.
\end{definition}

We avoid equating $\mathcal{U}$ with $\mathcal{U}^{fil}$ to emphasize that we will only concern ourselves with countably generated filters.  We now show that this causes no harm, since the properties of $\mathcal{U}$ will dictate appropriate properties for $\mathcal{U}^{fil}$.

\begin{lemma}
If $\mathcal{U}$ is a $\mathfrak{P}$-semigroup then so is $\mathcal{U}^{fil}$. 
\label{fil_also_fip}
\end{lemma}
\begin{proof}
In light of the preceding lemma, it suffices to show that whenever $X\in\mathcal{U}^{fil}$, there is a $Y\in\mathcal{U}^{fil}$ such that $X-n\in\mathcal{U}^{fil}$ for each $n\in Y$.  Let $X\in\mathcal{U}^{fil}$, and choose $\mathcal{F}\subseteq\mathcal{U}$ finite so that $\bigcap_{S\in\mathcal{F}}S\subseteq X$.  Then for each $S\in\mathcal{F}$, there is a $Y_S\in\mathcal{U}^{fil}$ so that for each $n\in Y_S$, $S-n\in\mathcal{U}^{fil}$.  Let $Y=\bigcap_{S\in\mathcal{F}}Y_S\in\mathcal{U}^{fil}$, so for each $n\in Y$, we have $S-n\in\mathcal{U}^{fil}$ for each $S\in\mathcal{F}$, and therefore $(\bigcap_{S\in\mathcal{F}}(S-n))=(\bigcap_{S\in\mathcal{F}}S)-n\subseteq X-n\in\mathcal{U}^{fil}$.     
\end{proof}

There are two useful ways to view $\mathfrak{P}$-semigroups.  The first is to observe that every $\mathfrak{P}$-semigroup represents a closed semigroup in the Stone-\v{C}ech compactification of the the discrete topology on $\mathbb{N}$ (or, equivalently, in the space of ultrafilters on $\mathbb{N}$): the semigroup corresponding to $\mathcal{U}$ is $\bigcap_{S\in\mathcal{U}}\overline{S}$ (where by $\overline{S}$, we mean the closure of the set $S$ in the Stone-\v{C}ech topology).  (\cite{hindman98} is a thorough reference on this topic.)  In particular, the proof of Hindman's Theorem using the Stone-\v{C}ech compactification makes use of the existence of idempotents; using the axiom of choice, every $\mathfrak{P}$-semigroup can be refined to an idempotent (indeed, to an idempotent consisting only of sets from $\mathfrak{P}$).

The second is to recall that an IP set is a set $S$ such that there is an infinite $T\subseteq S$ all of whose finite sums also belong to $S$.
\begin{definition}
If $T$ is a subset of $\mathbb{N}$, define
\[FS(T)=\{\sum_{i\in F}i\mid F\subseteq S, F\text{ finite and non-empty}\}.\]
\end{definition}
Then $S$ is an IP set if there is an infinite set $T$ with $FS(T)\subseteq S$.  The collection $\{FS(T)-n\mid n\in FS(T)\}$ (where $FS(T)$ is the finite sums from $T$) is a canonical example of a $\mathfrak{Q}$-semigroup where $\mathfrak{Q}$ is the collection of infinite sets.  The notion of a $\mathfrak{P}$-semigroup generalizes an IP set in two directions: first, it allows for more general choices of $\mathfrak{P}$.  Second, if we have an infinite descending sequence of IP sets $S_1\supseteq S_2\supseteq\cdots S_n\supseteq\cdots$, their intersection may well be $\emptyset$.  However the union of the corresponding $\mathfrak{P}$-semigroups is still a $\mathfrak{P}$-semigroup.  So $\mathfrak{P}$-semigroups also generalize IP sets by accommodating the result of infinitely many successive refinements of an IP set.

Indeed, this relationship reverses: it is not hard to see that if $S$ belongs to a $\mathfrak{P}$-semigroup then $S$ is an IP set.  Indeed, if we could prove that, in every partition of $\mathbb{N}$, one element of the partition belonged to a $\mathfrak{P}$-semigroup, we would be finished.  However, while this is certainly true (using arguments about the Stone-C\v{e}ch compactification), we are not aware of a direct combinatorial proof, so our ultimate argument will be less direct.

The argument here is very similar to a proof based on the Stone-C\v{e}ch compactification, but we emphasize that the $\mathfrak{P}$-semigroups appearing in our proof are much simpler objects: they are countable collections (for instance, they can be coded using only sets of natural numbers), built with no use of the axiom of choice.

We first prove some basic properties about $\mathfrak{P}$-semigroups.
\begin{lemma}
If $\mathcal{U}_0\subseteq\mathcal{U}_1\subseteq\cdots$ are $\mathfrak{P}$-semigroups then so is $\mathcal{U}=\bigcup_{n\in\mathbb{N}}\mathcal{U}_n$. 
\end{lemma}
\begin{proof}
Let $\mathcal{F}\subseteq\mathcal{U}$ be finite; then there is some $n$ such that $\mathcal{F}\subseteq\mathcal{U}_n$, and since $\mathcal{U}_n$ is a $\mathfrak{P}$-semigroup, $\bigcap_{S\in\mathcal{F}}S\in\mathfrak{P}$.

Let $X\in\mathcal{U}$.  Then $X\in\mathcal{U}_n$ for some $n$, so there is a $Y\in\mathcal{U}_n^{fil}\subseteq\mathcal{U}^{fil}$ such that for each $m\in Y$, $X-m\in\mathcal{U}_n^{fil}\subseteq\mathcal{U}^{fil}$. 
\end{proof}

\begin{lemma} 
If $\mathcal{U}$ is a $\mathfrak{P}$-semigroup and $\mathcal{U}\cup\{S-n\mid n\in S\}$ satisfies $\mathfrak{P}$-fip then $\mathcal{U}\cup\{S-n\mid n\in S\}$ is a $\mathfrak{P}$-semigroup.
\label{shifts_also_semigroup}
\end{lemma}
\begin{proof}
It suffices to check the semigroup property.  We claim that for each $m\in S-n$, $(S-n)-m\in\mathcal{U}\cup\{S-n\mid n\in S\}$.  This follows since if $m\in S-n$ then $n+m\in S$, so $(S-n)-m=S-(n+m)\in\mathcal{U}\cup\{S-n\mid n\in S\}$.
\end{proof}

\begin{lemma}
  Let $\mathcal{U}$ satisfy $\mathfrak{P}$-fip, and let $A$ be a set of natural numbers.  Then either $\mathcal{U}\cup\{A\}$ or $\mathcal{U}\cup\{\mathbb{N}\setminus A\}$ satisfies $\mathfrak{P}$-fip.
\label{one_or_the_other}
\end{lemma}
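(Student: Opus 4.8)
The plan is to prove the contrapositive: assuming that \emph{neither} $\mathcal{U}\cup\{A\}$ nor $\mathcal{U}\cup\{-1\cdot A\}$ satisfies $\mathfrak{P}$-fip, I will contradict the hypothesis that $\mathcal{U}$ itself satisfies $\mathfrak{P}$-fip. The first step is to unpack the two failures. Since $\mathcal{U}$ has $\mathfrak{P}$-fip, any finite subfamily of $\mathcal{U}\cup\{A\}$ whose intersection lies outside $\mathfrak{P}$ must actually contain $A$ (otherwise it is a finite subfamily of $\mathcal{U}$, whose intersection is in $\mathfrak{P}$). Hence there is a finite $\mathcal{G}_0\subseteq\mathcal{U}$ with $A\cap\bigcap_{S\in\mathcal{G}_0}S\notin\mathfrak{P}$, and symmetrically a finite $\mathcal{G}_1\subseteq\mathcal{U}$ with $(-1\cdot A)\cap\bigcap_{S\in\mathcal{G}_1}S\notin\mathfrak{P}$.

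Next I would merge the two witnesses. Put $\mathcal{G}=\mathcal{G}_0\cup\mathcal{G}_1$ and $Y=\bigcap_{S\in\mathcal{G}}S$; as a finite intersection of members of $\mathcal{U}$, we have $Y\in\mathfrak{P}$ by $\mathfrak{P}$-fip. Since $\mathcal{G}_0,\mathcal{G}_1\subseteq\mathcal{G}$, the set $Y$ is contained in each earlier intersection, so $A\cap Y\subseteq A\cap\bigcap_{S\in\mathcal{G}_0}S$ and $(-1\cdot A)\cap Y\subseteq(-1\cdot A)\cap\bigcap_{S\in\mathcal{G}_1}S$. Each of these larger sets is outside $\mathfrak{P}$, so by monotonicity (supersets of $\mathfrak{P}$-members are $\mathfrak{P}$-members, applied contrapositively) both $A\cap Y\notin\mathfrak{P}$ and $(-1\cdot A)\cap Y\notin\mathfrak{P}$.

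The crux is the splitting step, and here the paper's convention must be used exactly as written: $-1\cdot A$ denotes $\mathbb{N}\setminus A$, and since every uppercase variable ranges over subsets of $\mathbb{N}$, this is precisely the complement of $A$ \emph{within the ambient universe} $\mathbb{N}$. Thus $A\cup(-1\cdot A)=\mathbb{N}$, and intersecting with $Y\subseteq\mathbb{N}$ yields the genuine decomposition $Y=(A\cap Y)\cup((-1\cdot A)\cap Y)$. The partition axiom for $\mathfrak{P}$ then forces one of the two pieces into $\mathfrak{P}$, contradicting the previous paragraph. I expect the only real subtlety to be exactly this bookkeeping: one must not split $Y$ into $A\cap Y$ together with the complement of $A$ taken in $\mathbb{Z}$, but instead observe that $-1\cdot A=\mathbb{N}\setminus A$ already supplies the complement relative to the universe $\mathbb{N}$ in which all sets here live, so that the two pieces jointly exhaust $Y$ and the partition axiom applies. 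Every other step—combining the two finite witnesses, passing to the common intersection $Y$, and invoking monotonicity and the partition property—is a routine application of the listed axioms for $\mathfrak{P}$.
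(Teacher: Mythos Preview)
Your proof is correct and follows essentially the same approach as the paper: assume both extensions fail, extract finite witnesses $\mathcal{G}_0,\mathcal{G}_1\subseteq\mathcal{U}$, pass to their union, and use the partition axiom together with monotonicity to contradict $\mathfrak{P}$-fip of $\mathcal{U}$. Your write-up is in fact slightly more explicit than the paper's (which splits the combined intersection directly rather than first naming $Y$), but the argument is the same.
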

\begin{proof}
  Suppose neither collection satisfies $\mathfrak{P}$-fip.  Then choose finite sets $\mathcal{F},\mathcal{F}'\subseteq \mathcal{U}$ such that
$(\bigcap_{S\in\mathcal{F}}S\cap A)\not\in\mathfrak{P}$
and
$(\bigcap_{S\in\mathcal{F}'}S\cap (\mathbb{N}\setminus A))\not\in\mathfrak{P}$.
Then
\[\bigcap_{S\in\mathcal{F}\cup\mathcal{F'}}S\subseteq (\bigcap_{S\in\mathcal{F}}S\cap A)\cup(\bigcap_{S\in\mathcal{F}'}S\cap (\mathbb{N}\setminus A)).\]
But this is impossible, since $\bigcap_{S\in\mathcal{F}\cup\mathcal{F'}}S\in\mathfrak{P}$ must hold.
\end{proof}

\section{Dense Hindman's Theorem}
\begin{lemma}
Let $G\subseteq\mathbb{N}$ be a finite set, and suppose $Z\cap\bigcap_{m\in G}X_m\in\mathfrak{P}$ and for each $m\in G$, $X_m\cap Y_m\not\in\mathfrak{P}$.  Then $Z\cap\bigcap_{m\in G}(\mathbb{N}\setminus Y_m)\in\mathfrak{P}$. 
\label{splits_pieces}
\end{lemma}
\begin{proof}
Note that
\[Z\cap\bigcap_{m\in G}X_m\subseteq\bigcup_{m\in G}(X_m\cap Y_m)\cup(Z\cap\bigcap_{m\in G}(\mathbb{N}\setminus Y_m)).\]
Since for each $m\in G$, $X_m\cap Y_m\not\in\mathfrak{P}$, also $\bigcup_{m\in G}(X_m\cap Y_m)\not\in\mathfrak{P}$.  Therefore $\bigcup_{m\in G}(Z\cap\bigcap_{m\in G}(\mathbb{N}\setminus Y_m))\in\mathfrak{P}$ as desired.
\end{proof}

\begin{lemma}
Let $\mathcal{U}$ be a $\mathfrak{P}$-semigroup, let $X\in\mathcal{U}^{fil}$, and let $Y=\{n\mid X-n\in\mathcal{U}^{fil}\}$.  Then for every $n\in Y$, $Y-n\in\mathcal{U}^{fil}$. 
\label{shift_inv}
\end{lemma}
\begin{proof}
Let $n\in Y$.  Since $X-n\in\mathcal{U}^{fil}$, there is a $Z\in\mathcal{U}^{fil}$ such that for each $m\in Z$, $(X-n)-m=X-(n+m)\in\mathcal{U}^{fil}$.  Therefore $Z\subseteq Y-n$, and since $Z\in\mathcal{U}^{fil}$, also $Y-n\in\mathcal{U}^{fil}$. 
\end{proof}

\begin{lemma}
Let $\mathcal{U}$ be a $\mathfrak{P}$-semigroup and let $A\subseteq\mathbb{N}$ be such that $\mathcal{U}\cup\{ A\}$ does not satisfy $\mathfrak{P}$-fip.  Then there is a $\mathfrak{P}$-semigroup $\mathcal{V}$ extending $\mathcal{U}$ such that $\mathbb{N}\setminus A\in\mathcal{V}$.
\label{build_semigroup}
\end{lemma}
\begin{proof}
Let $X\in\mathcal{U}^{fil}$ be such that $X\cap A\not\in\mathfrak{P}$.  Let $n\in Y$ iff $X-n\in\mathcal{U}^{fil}$; clearly $0\in Y$.  Set $\mathcal{V}=\mathcal{U}\cup\{(\mathbb{N}\setminus A-n)\mid n\in Y\}$.

We first claim that $\mathcal{V}$ satisfies $\mathfrak{P}$-fip.  Let $Z\in\mathcal{U}^{fil}$ and let $G\subseteq Y$ be finite.  Since $Z\cap\bigcap_{n\in G}X-n\in\mathfrak{P}$ and for each $n$, $(X-n)\cap (A-n)\not\in\mathfrak{P}$, by Lemma \ref{splits_pieces}, $Z\cap\bigcap_{n\in G}(\mathbb{N}\setminus A-n)\in\mathfrak{P}$.

We now show that $\mathcal{V}$ satisfies the semigroup property.  Let $n\in Y$; we must find a witness for the set $\mathbb{N}\setminus A-n$.  By the previous lemma, we have $Y-n\in\mathcal{U}^{fil}$, and we claim that for each $m\in Y-n$, $(\mathbb{N}\setminus A-n)-m\in\mathcal{V}$.  Since $m\in Y-n$, $n+m\in Y$, we have $(\mathbb{N}\setminus A)-(n+m)=(\mathbb{N}\setminus A-n)-m\in\mathcal{V}$.  
\end{proof}

\begin{lemma}
  If $\mathcal{U}$ is a $\mathfrak{P}$-semigroup, $A\subseteq\mathbb{N}$, $S\subseteq\mathbb{N}$ has the property that
\[\mathcal{U}\cup\{\mathbb{N}\setminus A-n\mid n\not\in S\}\]
satisfies $\mathfrak{P}$-fip, and $\mathcal{U}\cup\{S-n\mid n\in S\}$ does not satisfy $\mathfrak{P}$-fip, then there are a finite $F\subseteq S$ and a $Y\in\mathcal{U}^{fil}$ such that $0\in Y$ and $\mathcal{U}\cup\{\bigcup_{n\in F}(\mathbb{N}\setminus A-n-m)\mid m\in Y\}$ is a $\mathfrak{P}$-semigroup.
\label{3.1}
\end{lemma}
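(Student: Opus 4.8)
The plan is to read both hypotheses as statements about a single $U\in\mathcal{U}$ and a single finite $F\subseteq S$, and then to recognize the target family as the family of all shifts of one fixed set. First I would unpack the failure of $\mathfrak{P}$-fip for $\mathcal{U}\cup\{S-n\mid n\in S\}$: since $\mathcal{U}$ is closed under finite intersections, this yields a single $U\in\mathcal{U}$ and finitely many $n_1,\dots,n_k\in S$ with $U\cap\bigcap_{i}(S-n_i)\notin\mathfrak{P}$. I take $F=\{n_1,\dots,n_k\}$; this will be the set promised by the conclusion. Next I would simplify the target. Writing $R=\bigcap_{n\in F}(A-n)$ and $G_0=\mathbb{N}\setminus R=\bigcup_{n\in F}(-1\cdot(A-n))$, a direct computation using $-1\cdot(A-n-m)=(-1\cdot(A-n))-m$ shows $\bigcup_{n\in F}(-1\cdot(A-n-m))=G_0-m$. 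Thus the family whose $\mathfrak{P}$-fip I must establish is exactly $\mathcal{U}\cup\{G_0-m\mid m\in Y\}$, the translates of the single set $G_0$. The semigroup property now supplies the index set: applying it to $U$ gives some $Y\gin\mathcal{U}$ with $U-m\gin\mathcal{U}$ for each $m\in Y$, and since $U-0=U\gin\mathcal{U}$ I may enlarge $Y$ so that $0\in Y$.

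For the $\mathfrak{P}$-fip itself, fix $V\in\mathcal{U}$ and $m_1,\dots,m_l\in Y$; I must show $V\cap\bigcap_j(G_0-m_j)\in\mathfrak{P}$. The point of the rewriting is the set identity $\bigcap_j(G_0-m_j)=\mathbb{N}\setminus\bigcup_j(R-m_j)$, so this is the claim $V\setminus\bigcup_j(R-m_j)\in\mathfrak{P}$. Here the semigroup data enters: choosing (using closure under intersection) a single $V^\ast\in\mathcal{U}$ with $V^\ast\subseteq V$ and $V^\ast\subseteq U-m_j$ for every $j$, one has $x+m_j\in U$ for all $x\in V^\ast$, so on $V^\ast$ the set $R-m_j$ coincides with $(U\cap R)-m_j$; hence $V^\ast\cap\bigcup_j(R-m_j)\subseteq\bigcup_j((U\cap R)-m_j)$. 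If I can guarantee $U\cap R\notin\mathfrak{P}$, then each translate $(U\cap R)-m_j\notin\mathfrak{P}$ by shift-invariance, their finite union is not in $\mathfrak{P}$ by partition regularity, and hence (as $\mathfrak{P}$ is upward closed, so its complement is downward closed) $V^\ast\cap\bigcup_j(R-m_j)\notin\mathfrak{P}$. Splitting the set $V^\ast\in\mathfrak{P}$ as this piece together with $V^\ast\setminus\bigcup_j(R-m_j)$ and applying partition regularity forces $V^\ast\setminus\bigcup_j(R-m_j)\in\mathfrak{P}$, whence $V\setminus\bigcup_j(R-m_j)\in\mathfrak{P}$ by upward closure.

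Everything therefore reduces to the single inequality $U\cap R=U\cap\bigcap_{n\in F}(A-n)\notin\mathfrak{P}$, and I expect this to be the main obstacle, since it is exactly the point at which the hypothesis about $A$ must be converted into the hypothesis about $S$. My plan is to split $U\cap R$ according to whether an element is shifted into $S$ by all of $F$: the part lying in $\bigcap_{n\in F}(S-n)$ is contained in $U\cap\bigcap_{n\in F}(S-n)$, which is not in $\mathfrak{P}$ by the second hypothesis. For the remaining part, an element $x$ has $x+n\notin S$ for some $n\in F$, and then the hypothesis that $\mathcal{U}\cup\{-1\cdot(A-n')\mid n'\notin S\}$ has $\mathfrak{P}$-fip says precisely that the corresponding translate $A-(x+n)$ is ``avoidable''; the semigroup property is what should let me upgrade these pointwise avoidances into the uniform statement that this remaining part is $\mathfrak{P}$-small. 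Granting that neither part is in $\mathfrak{P}$, partition regularity gives $U\cap R\notin\mathfrak{P}$ and completes the argument. The delicate step—and the one I would expect to require the full strength of both the semigroup property and the joint (not merely pointwise) form of the first hypothesis—is controlling this second part, that is, showing that the portion of $U\cap R$ escaping $\bigcap_{n\in F}(S-n)$ cannot itself be $\mathfrak{P}$-large.
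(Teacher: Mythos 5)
Your opening moves match the paper's: extracting $U$ (the paper's $X$) and $F\subseteq S$ from the failure of $\mathfrak{P}$-fip, taking $Y$ from the semigroup property with $0\in Y$, and rewriting the target family as the translates $G_0-m$ of $G_0=\bigcup_{n\in F}-1\cdot(A-n)$ are all correct and are exactly how the paper begins. The fatal step is your reduction to the claim $U\cap R=U\cap\bigcap_{n\in F}(A-n)\not\in\mathfrak{P}$. This is not merely the ``delicate step'' you defer to the end; it is false in general, so no argument can supply it. The hypotheses constrain $A-n$ only for $n\not\in S$, and there they assert that the \emph{complements} $-1\cdot(A-n)$ are jointly $\mathfrak{P}$-large---which is entirely compatible with the sets $A-n$, $n\in S$, being large as well (a set and its complement can both lie in $\mathfrak{P}$). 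Indeed, in the lemma's intended application (the remark immediately following it), $\mathcal{U}\cup\{A-n\mid n\in S\}$ itself satisfies $\mathfrak{P}$-fip, so $U\cap\bigcap_{n\in F}(A-n)\in\mathfrak{P}$ for \emph{every} $U\in\mathcal{U}$ and finite $F\subseteq S$. Concretely: take $\mathfrak{P}$ to be the infinite sets, $\mathcal{U}=\{\mathbb{N}\}$, $A=\bigcup_k\{10^k,10^k+1\}$, $S=\{0,1\}$. Both hypotheses of the lemma hold (e.g.\ $F=\{0\}$ witnesses the failure of $\mathfrak{P}$-fip, as $S-0$ is finite), and the lemma's conclusion holds, yet $U\cap\bigcap_{n\in F}(A-n)$ is infinite for every admissible $U$ and $F$. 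Equivalently, the portion of $U\cap R$ escaping $\bigcap_{n\in F}(S-n)$ \emph{can} be $\mathfrak{P}$-large; your sufficient condition is simply too strong.

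The paper's proof succeeds because it never tries to show the translates of $R$ are small. Instead it exploits shift-invariance in the opposite direction: to prove that $Z\cap\bigcap_{m\in G}\bigcup_{n\in F}-1\cdot(A-n-m)$ lies in $\mathfrak{P}$, it suffices to prove that its shift by a single well-chosen point $k$ lies in $\mathfrak{P}$. The hypothesis about $S$ is used to produce that point: taking $Z'\gin\mathcal{U}$ with $Z-n\gin\mathcal{U}$ for $n\in Z'$, one finds $k\in Z'\cap\bigcap_{m\in G}\bigcup_{n\in F}-1\cdot(S-n-m)$ (the non-largeness of $X\cap\bigcap_{n\in F}(S-n)$, together with partition regularity and shift-invariance, is what guarantees this set meets $Z'$, after refining $Z'$ inside the sets $X-m$, $m\in G$). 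For such a $k$, every $m\in G$ admits some $n_m\in F$ with $k+n_m+m\not\in S$, and now the \emph{first} hypothesis takes over: since $Z-k\gin\mathcal{U}$ and each index $k+n_m+m$ lies outside $S$, the $\mathfrak{P}$-fip of $\mathcal{U}\cup\{-1\cdot(A-j)\mid j\not\in S\}$ gives $(Z-k)\cap\bigcap_{m\in G}-1\cdot(A-k-n_m-m)\in\mathfrak{P}$, which is contained in the $k$-shift of the target set; shift-invariance of $\mathfrak{P}$ then transfers membership back. This interplay---the second hypothesis producing a shift point at which the first hypothesis applies---is the mechanism your proposal lacks, and without it the two hypotheses never interact in your argument at all.
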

\begin{proof}
  Since $\mathcal{U}\cup\{S-n\mid n\in S\}$ does not satisfy $\mathfrak{P}$-fip, let $X\in\mathcal{U}^{fil}$ and $F\subseteq S$ be finite such that $X\cap\bigcap_{n\in F}(S-n)\not\in\mathfrak{P}$.  Let $Y=\{m\mid X-m\in\mathcal{U}^{fil}\}$; clearly $0\in Y$ since $X\in\mathcal{U}^{fil}$.  Note that for any $m$, $(X-m)\cap\bigcap_{n\in F}(S-n-m)\not\in\mathfrak{P}$.

We claim that $\mathcal{U}\cup\{\bigcup_{n\in F}(\mathbb{N}\setminus A-n-m)\mid m\in Y\}$ satisfies $\mathfrak{P}$-fip.  Let $Z\in\mathcal{U}^{fil}$ and $G\subseteq Y$ be finite.  Let $Z'\in\mathcal{U}^{fil}$ be such that $Z-n\in\mathcal{U}^{fil}$ for each $n\in Z'$.  By Lemma \ref{fil_also_fip}, $Z'\in\mathfrak{P}$.  Since $Z'\cap \bigcap_{m\in G}(X-m)\in\mathcal{U}^{fil}\subseteq\mathfrak{P}$, we may apply Lemma \ref{splits_pieces} to obtain
\[Z'\cap\bigcap_{m\in G}\bigcup_{n\in F}(\mathbb{N}\setminus S-n-m)\in\mathfrak{P}.\]
In particular, this set is non-empty, so it contains some element $k$.  Therefore $Z-k\in\mathcal{U}^{fil}$ and for each $m\in G$ there is an $n\in F$ such that $k+n+m\not\in S$, and therefore
\[Z-k\cap\bigcap_{m\in G}\bigcup_{n\in F}(\mathbb{N}\setminus A-n-m-k)\in\mathfrak{P}\]
since $\mathcal{U}\cup\{\mathbb{N}\setminus A-n\mid n\not\in S\}$ satisfies $\mathfrak{P}$-fip.  Since $\mathfrak{P}$ is shift invariant, we also have
\[Z\cap\bigcap_{m\in G}\bigcup_{n\in F}(\mathbb{N}\setminus A-n-m)\in\mathfrak{P}\]
as desired.

To see that $\mathcal{U}\cup\{\bigcup_{n\in F}(\mathbb{N}\setminus A-n-m)\mid m\in Y\}$ satisfies the semigroup property, we proceed as in the previous lemma: for each $m\in Y$, $Y-m\in\mathcal{U}^{fil}$ by Lemma \ref{shift_inv}, and for each $k\in Y-m$, $m+k\in Y$ and therefore $(\bigcup_{n\in F}(\mathbb{N}\setminus A-n-m))-k=\bigcup_{n\in F}(\mathbb{N}\setminus A-n-(m+k))\in\mathcal{U}\cup\{\bigcup_{n\in F}(\mathbb{N}\setminus A-n-m)\mid m\in Y\}$.
\end{proof}

%
%
%

\begin{definition}
  We say $A$ is \emph{large} relative to a $\mathfrak{P}$-semigroup $\mathcal{U}$ if whenever $\mathcal{V}$ is a $\mathfrak{P}$-semigroup extending $\mathcal{U}$, $\mathcal{V}\cup\{A\}$ satisfies $\mathfrak{P}$-fip.
\end{definition}

A consequence of this definition is the following:
\begin{lemma}
For any $A$ and any $\mathfrak{P}$-semigroup $\mathcal{U}$, either $A$ is large for $\mathcal{U}$ or there is a $\mathfrak{P}$-semigroup $\mathcal{V}\supseteq\mathcal{U}$ such that $(\mathbb{N}\setminus A)\in\mathcal{V}^{fil}$.
\label{choice}
\end{lemma}
\begin{proof}
This follows by applying Lemma \ref{build_semigroup} to the definition.
\end{proof}

\begin{lemma}
  If $C=C_1\cup\cdots\cup C_n$ and $C$ is large relative to $\mathcal{U}$, there are a $\mathfrak{P}$-semigroup $\mathcal{V}$ extending $\mathcal{U}$ and an $i$ such that $C_i$ is large for $\mathcal{V}$.
\end{lemma}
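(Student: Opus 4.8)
The plan is to argue by contradiction, building a finite increasing chain of $\mathfrak{P}$-semigroups that simultaneously defeats every piece $C_i$, and then to play the partition property of $\mathfrak{P}$ against the largeness of $C$. First I would negate the conclusion: assume that for \emph{every} $\mathfrak{P}$-semigroup $\mathcal{V}$ extending $\mathcal{U}$ and every $i\leq n$, the set $C_i$ fails to be large relative to $\mathcal{V}$. Unwinding the definition of large, this says exactly that for each such $\mathcal{V}$ and each $i$ there is a $\mathfrak{P}$-semigroup $\mathcal{W}\supseteq\mathcal{V}$ for which $\mathcal{W}\cup\{C_i\}$ fails $\mathfrak{P}$-fip.

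Next I would construct a chain $\mathcal{U}=\mathcal{V}_0\subseteq\mathcal{V}_1\subseteq\cdots\subseteq\mathcal{V}_n$ of $\mathfrak{P}$-semigroups. At stage $i$ the collection $\mathcal{V}_{i-1}$ is a $\mathfrak{P}$-semigroup extending $\mathcal{U}$, so the negated hypothesis applies to it: $C_i$ is not large relative to $\mathcal{V}_{i-1}$, and I take $\mathcal{V}_i\supseteq\mathcal{V}_{i-1}$ to be a $\mathfrak{P}$-semigroup with $\mathcal{V}_i\cup\{C_i\}$ failing $\mathfrak{P}$-fip. The crucial observation is that failure of $\mathfrak{P}$-fip persists upward: such a failure is witnessed by a single \emph{finite} subfamily, which survives into every larger collection. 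Hence at the top of the chain $\mathcal{V}_n\cup\{C_i\}$ fails $\mathfrak{P}$-fip for \emph{every} $i$ at once.

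Finally I would extract, for each $i$, a finite $\mathcal{F}_i\subseteq\mathcal{V}_n$ with $\bigcap_{S\in\mathcal{F}_i}S\cap C_i\notin\mathfrak{P}$, and set $D=\bigcap_{S\in\mathcal{F}_1\cup\cdots\cup\mathcal{F}_n}S$, which lies in $\mathcal{V}_n$ because $\mathfrak{P}$-semigroups are closed under finite intersection. By upward closure of $\mathfrak{P}$ we get $D\cap C_i\notin\mathfrak{P}$ for each $i$, and since $D\cap C=\bigcup_{i}(D\cap C_i)$, the partition property of $\mathfrak{P}$ (iterated from two pieces to $n$ pieces by a trivial induction) forces $D\cap C\notin\mathfrak{P}$. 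On the other hand $C$ is large relative to $\mathcal{U}$ and $\mathcal{V}_n$ is a $\mathfrak{P}$-semigroup extending $\mathcal{U}$, so $\mathcal{V}_n\cup\{C\}$ satisfies $\mathfrak{P}$-fip and in particular $D\cap C\in\mathfrak{P}$, a contradiction.

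The step I expect to be the main (if modest) obstacle is the chain construction. One must be careful that the negated hypothesis genuinely applies at each stage, which requires that each $\mathcal{V}_{i-1}$ is again a $\mathfrak{P}$-semigroup extending $\mathcal{U}$, and that the $n$ separate failures can be made to coexist. Both are supplied precisely by the upward persistence of $\mathfrak{P}$-fip failure and by closure under finite intersection, so once these are isolated the remainder of the argument is routine. Notably, this proof uses only upward closure, the partition property, and closure under intersection; the shift-invariance and semigroup-refinement clauses play no role here.
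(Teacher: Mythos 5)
Your proof is correct, but it is organized quite differently from the paper's, so a comparison is worthwhile. The paper reduces to $n=2$ by induction and then argues by dichotomy: either $C_1$ is already large for $\mathcal{U}$ (so $\mathcal{V}=\mathcal{U}$ works), or some $\mathcal{V}$ extending $\mathcal{U}$ witnesses non-largeness of $C_1$, ``and therefore $C_2$ is large relative to $\mathcal{V}$''---a step the paper leaves entirely implicit. You instead negate the whole conclusion, build a chain $\mathcal{U}=\mathcal{V}_0\subseteq\cdots\subseteq\mathcal{V}_n$ of witnesses to non-largeness, push all $n$ finite failures of $\mathfrak{P}$-fip up to $\mathcal{V}_n$, and contradict the largeness of $C$ via the partition property. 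The mathematical content coincides: the computation you make explicit (a finite fip-failure persists into every extension; combine it with largeness of $C$, upward closure, and the partition axiom) is exactly what is needed to justify the paper's implicit ``therefore,'' so your argument is essentially the contrapositive form of the paper's with the key verification spelled out and the induction on $n$ replaced by a single chain. What the paper's organization buys is brevity and the explicit dichotomy (either $\mathcal{U}$ itself works for $C_1$, or the witness works for $C_2$); what yours buys is that the one nontrivial step is visible rather than hidden, and all $n$ pieces are handled at once. Two small points in your write-up deserve a remark: first, when extracting $\mathcal{F}_i\subseteq\mathcal{V}_n$ with $\bigl(\bigcap_{S\in\mathcal{F}_i}S\bigr)\cap C_i\notin\mathfrak{P}$, you should note that the finite subfamily witnessing the failure for $\mathcal{V}_n\cup\{C_i\}$ must actually contain $C_i$, since $\mathcal{V}_n$ itself satisfies $\mathfrak{P}$-fip; second, you do not even need closure under intersections to place $D$ in $\mathcal{V}_n$, because $\mathcal{F}_1\cup\cdots\cup\mathcal{F}_n\cup\{C\}$ is already a finite subfamily of $\mathcal{V}_n\cup\{C\}$, so the $\mathfrak{P}$-fip of that collection gives $D\cap C\in\mathfrak{P}$ directly.
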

\begin{proof}
  Applying induction, it suffices to consider the case $n=2$.  If $C_1$ is large relative to $\mathcal{U}$ then $1$ and $\mathcal{U}$ suffice.  Otherwise there is a $\mathcal{V}$ extending $\mathcal{U}$ such that $C_2\in\mathcal{V}^{fil}$, and so certainly $C_2$ is large relative to $\mathcal{V}$.
\end{proof}

\begin{theorem}
  Let $\mathcal{U}$ be a $\mathfrak{P}$-semigroup and $A\subseteq\mathbb{N}$.  If $A$ is large for $\mathcal{U}$ then there is a $\mathfrak{P}$-semigroup $\mathcal{V}$ extending $\mathcal{U}$ such that $\{n\in A\mid A\cap (A-n)\text{ is large for }\mathcal{V}\}\in\mathfrak{P}$.
\end{theorem}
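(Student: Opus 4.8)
The plan is to reduce the theorem to a single extension problem: find a $\mathfrak{P}$-semigroup $\mathcal{V}$ extending $\mathcal{U}$ with $A\gin\mathcal{V}$, and then read off the conclusion from the semigroup property itself. Suppose we have such a $\mathcal{V}$. Writing $A\supseteq\bigcap_{S\in\mathcal{F}}S$ for some finite $\mathcal{F}\subseteq\mathcal{V}$, I would apply the semigroup property to each $S\in\mathcal{F}$ to obtain $Y_S\gin\mathcal{V}$ with $S-n\gin\mathcal{V}$ for every $n\in Y_S$, and set $Y=\bigcap_{S\in\mathcal{F}}Y_S\gin\mathcal{V}$. For $n\in Y$ we get $(\bigcap_{S\in\mathcal{F}}S)-n\gin\mathcal{V}$, hence $A-n\gin\mathcal{V}$. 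Now $A\cap Y\gin\mathcal{V}$, so $A\cap Y\in\mathfrak{P}$; and for each $n\in A\cap Y$ both $A\gin\mathcal{V}$ and $A-n\gin\mathcal{V}$, so $A\cap(A-n)\gin\mathcal{V}$. The point is that any set which is $\gin\mathcal{V}$ is automatically large for $\mathcal{V}$: if $X\supseteq\bigcap\mathcal{F}$ with $\mathcal{F}\subseteq\mathcal{V}$, then for any semigroup $\mathcal{W}\supseteq\mathcal{V}$ and finite $\mathcal{G}\subseteq\mathcal{W}$ we have $X\cap\bigcap\mathcal{G}\supseteq\bigcap(\mathcal{F}\cup\mathcal{G})\in\mathfrak{P}$. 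Hence every $n\in A\cap Y$ lies in the set whose $\mathfrak{P}$-membership is wanted, and since $A\cap Y\in\mathfrak{P}$ and $\mathfrak{P}$ is upward closed, that set is in $\mathfrak{P}$.

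It remains to build $\mathcal{V}$. I would start from $\mathcal{U}\cup\{A\}$, which satisfies $\mathfrak{P}$-fip because $A$ is large (take $\mathcal{V}=\mathcal{U}$ in the definition), and grow it through countably many stages into a $\mathfrak{P}$-semigroup, maintaining $\mathfrak{P}$-fip and closure under intersection and keeping $A$ a member throughout. Enumerating the sets committed to $\mathcal{V}$, at the stage handling a committed set $X$ I must secure the semigroup shift condition for $X$: produce $Y\gin\mathcal{V}$ with $X-m\gin\mathcal{V}$ for all $m\in Y$. To do this I let $S$ be a maximal set for which the current collection together with $\{X-m\mid m\in S\}$ still has $\mathfrak{P}$-fip, deciding each candidate shift one at a time via the building-block lemma (adding $X-m$ whenever fip survives, otherwise its complement). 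For $m\notin S$ adjoining $X-m$ destroys fip, so the ``in particular'' observation following Lemma~\ref{3.1} places $S$, with $X$ in the role of $A$, into the hypotheses of that lemma, and Lemma~\ref{3.1} is exactly what certifies that the committed shifts $\{X-m\mid m\in S\}$ can serve as the required witness. Because $A$ is large, \emph{every} such extension still admits $A$, so $A$ can be kept throughout and the final $\mathcal{V}$ is a $\mathfrak{P}$-semigroup containing $A$.

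The main obstacle is precisely this construction of $\mathcal{V}$, and within it the verification of the semigroup shift condition: one must show that the maximal set $S$ of addable shifts is itself captured by the growing collection, so that it yields a legitimate $Y\gin\mathcal{V}$, rather than the complementary set of non-addable shifts being the large side. This is the role of Lemma~\ref{3.1}, whose conclusion is phrased in terms of the complements $\bigcup_{n\in F}-1\cdot(A-n-m)$ and whose shift by the arbitrary $m$ must be undone using the shift-invariance of $\mathfrak{P}$. The delicate bookkeeping is to meet the shift condition for every committed set across all stages simultaneously while never losing $A$, and here the ``for every extension'' strength built into the definition of largeness is exactly what guarantees that $A$ survives no matter which shifts the construction is forced to commit to.
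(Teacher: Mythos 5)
Your first paragraph is correct, and it is a genuinely different packaging from the paper's: you reduce the theorem to producing a $\mathfrak{P}$-semigroup $\mathcal{V}$ extending $\mathcal{U}$ with $A\gin\mathcal{V}$, and your verification that this suffices (finite intersections of $\gin$-sets are $\gin$, shifts of members by elements of the witness sets are $\gin$, and every set $\gin\mathcal{V}$ is large for $\mathcal{V}$) is sound. Note, though, that the paper never does this: its final semigroup contains neither $A$ nor any shift $A-n$ as a member; $A$ is only kept $\mathfrak{P}$-fip-compatible, and the largeness of the sets $A\cap A-n$ is tracked by a side condition built into $\mathcal{V}$ rather than by membership. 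Your reduction therefore concentrates the entire content of the theorem into your second paragraph, and that construction does not go through as described.

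Three concrete problems. First, Lemma~\ref{3.1} is stated for a $\mathfrak{P}$-semigroup $\mathcal{U}$, and its proof uses the shift condition essentially (the step producing $Z'\gin\mathcal{U}$ with $Z-n\gin\mathcal{U}$ for each $n\in Z'$). Your ``current collection'' in mid-construction is by design not yet a $\mathfrak{P}$-semigroup---that is exactly what you are trying to build---so invoking the lemma there is circular. Second, even where the lemma applies, its dichotomy concerns the family $\{S-n\mid n\in S\}$ of shifts of $S$ by its own elements, not the family $\{X-m\mid m\in S\}$ you commit. Adjoining $\{X-m\mid m\in S\}$ as members does not secure the shift condition for $X$: that condition needs a single witness set $Y\gin\mathcal{V}$ with $X-m\gin\mathcal{V}$ for all $m\in Y$, and nothing in your construction ever makes $S$ (or any subset of it) $\gin\mathcal{V}$. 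The paper adjoins $\{S-n\mid n\in S\}$ with $0\in S$ precisely because that family is self-witnessing ($(S-n)-m=S-(n+m)$); to combine this with your plan you would need the joint $\mathfrak{P}$-fip of both families, which Lemma~\ref{3.1} does not give. Third, and most fundamentally, in the ``bad'' case of the dichotomy your construction stalls: since $0\in Y$, the lemma's conclusion adjoins $\bigcup_{n\in F}-1\cdot(X-n)$, which has empty intersection with $\bigcap_{n\in F}(X-n)$, i.e.\ it is incompatible with the shifts you have committed as members, and nothing rules this case out. The paper excludes the bad case by a largeness contradiction, which is available only because its $S$ is defined through largeness (every finite $F\subseteq S$ has $\bigcap_{n\in F}A-n$ large for $\mathcal{V}$, where $\mathcal{V}$ was pre-built to decide, for every finite $F$, largeness versus $\bigcup_{n\in F}-1\cdot(A-n)\gin\mathcal{V}$). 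Your $S$ is merely fip-maximal, and fip-compatibility with the current family, unlike largeness, carries no quantification over future extensions. For the same reason, ``because $A$ is large, every such extension still admits $A$'' misapplies the definition: largeness quantifies only over $\mathfrak{P}$-semigroup extensions, not over the intermediate fip families your construction passes through. The missing idea is exactly the paper's replacement of fip-maximality by largeness-maximality, together with the self-witnessing family $\{S-n\mid n\in S\}$.
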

\begin{proof}
Fix an enumeration $F_0,\ldots,F_n,\ldots$ of the finite non-empty sets of natural numbers.  We construct a sequence of $\mathfrak{P}$-semigroups $\mathcal{U}=\mathcal{U}_0\subseteq\mathcal{U}_1\subseteq\cdots$ such that for each $n$ and all $i< n$, either $\bigcap_{m\in F_i}(A-m)$ is large for $\mathcal{U}_n$ or $\bigcup_{m\in F_i}(\mathbb{N}\setminus(A-m))\in\mathcal{U}^{fil}_n$.

Suppose we have constructed $\mathcal{U}_n$.  If $\bigcap_{m\in F_{n}}(A-m)$ is large for $\mathcal{U}_n$, we take $\mathcal{U}_{n+1}=\mathcal{U}_n$.  Otherwise we apply Lemma \ref{choice} to obtain $\mathcal{U}_{n+1}\supseteq\mathcal{U}_n$ so that $\mathcal{U}_{n+1}^{fil}$ contains $\bigcup_{m\in F_{n}}(\mathbb{N}\setminus (A-m))$.

We then set $\mathcal{V}=\bigcup_n\mathcal{U}_n$.  $\mathcal{V}$ is a $\mathfrak{P}$-semigroup, and for any finite non-empty set $F$, either $\bigcap_{m\in F}(A-m)$ is large for $\mathcal{V}$ or $\bigcup_{m\in F}(\mathbb{N}\setminus(A-m))\in\mathcal{V}^{fil}$.

We now construct a set $S$ inductively, setting $S_0=\{0\}$ and $S_{n+1}=S_n$ if $\bigcap_{i\in S_n}(A-i)\cap(A-(n+1))$ is not large for $\mathcal{V}$ and $S_{n+1}=S_n\cup\{n+1\}$ if $\bigcap_{i\in S_n}(A-i)\cap(A-(n+1))$ is large for $\mathcal{V}$.  Set $S=\bigcup_n S_n$.  $S$ has the property that for each finite non-empty $F\subseteq S$, $\bigcap_{n\in F}(A-n)$ is large for $\mathcal{V}$, while if $m\not\in S$ then there is a finite non-empty $F\subseteq S$ (indeed, $\{n\in S\mid n<m\}$) such that $\bigcap_{n\in F\cup\{m\}}(A-n)$ is not large for $\mathcal{V}$.

Let $F\subseteq\mathbb{N}\setminus S$ be finite and non-empty.  Then we may choose a $G\subseteq S$ so that for each $m\in F$, $\bigcap_{n\in G\cup\{m\}}(A-n)$ is not large for $\mathcal{V}$.  Then, by the construction of $\mathcal{V}$, $\bigcup_{n\in G\cup\{m\}}(\mathbb{N}\setminus A-n)\in\mathcal{V}^{fil}$ for each $m\in F$, and since $\mathcal{V}\cup\{A-n\mid n\in G\}$ satisfies $\mathfrak{P}$-fip, in particular $\bigcap_{m\in F}\bigcup_{n\in G\cup\{m\}}(\mathbb{N}\setminus A-n)\cap\bigcap_{n\in G}A-n\in\mathfrak{P}$.  But this set is simply $\bigcap_{m\in F}\mathbb{N}\setminus A-m$, so $\mathcal{V}\cup\{\mathbb{N}\setminus A-m\mid m\in F\}$ satisfies $\mathfrak{P}$-fip.  This holds for every finite $F\subseteq \mathbb{N}\setminus S$, so $\mathcal{V}\cup\{\mathbb{N}\setminus A-m\mid m\not\in S\}$ satisfies $\mathfrak{P}$-fip.

Therefore, by Lemma \ref{3.1}, if $\mathcal{V}\cup\{S-n\mid n\in S\}$ does not satisfy $\mathfrak{P}$-fip then there are a finite set $F\subseteq S$ and a $\mathfrak{P}$-semigroup $\mathcal{W}$ extending $\mathcal{V}$ such that $\bigcup_{n\in F}(\mathbb{N}\setminus A-n)\in\mathcal{W}^{fil}$.  But this would contradict the fact that $\bigcap_{n\in F}A-n$ is large for $\mathcal{V}$.  So by Lemma \ref{shifts_also_semigroup}, $\mathcal{V}\cup\{S-n\mid n\in S\}$ satisfies $\mathfrak{P}$-fip, and is therefore a $\mathfrak{P}$-semigroup.

Since $A$ is large for $\mathcal{V}$, also $\mathcal{V}\cup\{S-n\mid n\in S\}\cup\{A\}$ satisfies $\mathfrak{P}$-fip, so in particular, $S\cap A\in\mathfrak{P}$.  Since for each $n\in S$, $A\cap (A-n)$ is large for $\mathcal{V}$, the claim is proven.
\end{proof}

\begin{theorem}
  Let $\mathbb{N}=A_1\cup\cdots\cup A_r$.  There are some $i\leq r$ and a collection $\mathcal{T}$ of finite sets of natural numbers such that:
  \begin{itemize}
  \item $\emptyset\in\mathcal{T}$
  \item If $F\in\mathcal{T}$, $\{n\mid F\cup\{n\}\in\mathcal{T}\}$ belongs to $\mathfrak{P}$
  \item If $F\in\mathcal{T}$ then $FS(F)\subseteq A_i$
  \end{itemize}
\end{theorem}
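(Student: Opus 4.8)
The plan is to fix the color using the large-set partition lemma and then to iterate the preceding Theorem, growing the tree one level at a time while carrying a largeness invariant. To start, I would take the trivial $\mathfrak{P}$-semigroup $\mathcal{U}_0=\{\mathbb{N}\}$ (which is a $\mathfrak{P}$-semigroup, with $Y=\mathbb{N}$ serving as the required witness). Since intersecting with $\mathbb{N}$ does nothing, $\mathbb{N}$ is large relative to every $\mathfrak{P}$-semigroup, so in particular $\mathbb{N}=A_1\cup\cdots\cup A_r$ is large relative to $\mathcal{U}_0$. The lemma on large unions then produces an index $i$ and a $\mathfrak{P}$-semigroup $\mathcal{V}_0\supseteq\mathcal{U}_0$ for which $A:=A_i$ is large; this $i$ is the color in the conclusion.

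For the iteration I would attach to each finite set $F$ the set $A^F=\bigcap_{s\in FS(F)}(A-s)$, so that $A^\emptyset=A$. The point of this choice is the identity $A^{F\cup\{n\}}=A^F\cap(A^F-n)$, valid whenever $n\notin F$, which follows from $FS(F\cup\{n\})=FS(F)\cup(n+FS(F))$; this is exactly the shape of set appearing in the preceding Theorem. I would then build $\mathcal{T}$ by recursion on levels, attaching to each node $F\in\mathcal{T}$ a $\mathfrak{P}$-semigroup $\mathcal{V}_F\supseteq\mathcal{V}_0$ subject to the invariant that $A^F$ is large for $\mathcal{V}_F$; the base case is $\emptyset$ with $\mathcal{V}_\emptyset=\mathcal{V}_0$, already arranged above.

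The recursive step is a single application of the preceding Theorem. Given $F$ with $A^F$ large for $\mathcal{V}_F$, it yields a $\mathfrak{P}$-semigroup $\mathcal{V}'\supseteq\mathcal{V}_F$ such that $\{n\in A^F\mid A^F\cap(A^F-n)\text{ is large for }\mathcal{V}'\}\in\mathfrak{P}$. I would intersect this set with $\{n\mid n>\max F\}$; this keeps it in $\mathfrak{P}$ (no nonempty finite set can lie in $\mathfrak{P}$, since the partition and shift axioms would then force $\emptyset\in\mathfrak{P}$, so deleting an initial segment is harmless by the partition axiom), and it guarantees $n\notin F$, so that the identity above applies and the branching is genuine. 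Declaring each such $F\cup\{n\}$ a child of $F$ with $\mathcal{V}_{F\cup\{n\}}=\mathcal{V}'$ preserves the invariant, because $A^{F\cup\{n\}}=A^F\cap(A^F-n)$ is large for $\mathcal{V}'$.

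It then remains to read off the three conditions. The root $\emptyset$ is in $\mathcal{T}$ by construction; for any $F\in\mathcal{T}$ the successor set $\{n\mid F\cup\{n\}\in\mathcal{T}\}$ contains the $\mathfrak{P}$-set produced at $F$ and so lies in $\mathfrak{P}$ by upward closure; and $FS(F)\setminus\{0\}\subseteq A$ follows by induction, since $n\in A^F$ says precisely that $s+n\in A$ for every $s\in FS(F)$, whence every new sum in $FS(F\cup\{n\})$ lands in $A$ while the old ones do so by the inductive hypothesis. The genuinely hard part is entirely contained in the preceding Theorem; here the only real content is choosing the invariant $A^F$ so that one growth step is literally an instance of that Theorem, together with the modest bookkeeping—restricting to $n>\max F$ and observing that finite sets avoid $\mathfrak{P}$—needed to keep the tree honestly branching.
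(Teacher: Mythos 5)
Your proof is correct and takes essentially the same route as the paper: fix the color $i$ and an initial $\mathfrak{P}$-semigroup via the partition lemma for large sets, then grow the tree one level at a time by applying the preceding theorem at each node $F$, carrying the invariant that $\bigcap_{s\in FS(F)}(A_i-s)$ is large for a $\mathfrak{P}$-semigroup attached to $F$. If anything, your bookkeeping is slightly more careful than the paper's: by keeping $0\in FS(F)$ you ensure $A^F\subseteq A_i$ and get the clean identity $A^{F\cup\{n\}}=A^F\cap(A^F-n)$, whereas the paper states the invariant with $FS(F)\setminus\{0\}$, and your restriction to $n>\max F$ (justified by noting finite sets cannot lie in $\mathfrak{P}$) handles a detail the paper leaves implicit.
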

\begin{proof}
  Since $\mathbb{N}$ is large for the trivial $\mathfrak{P}$-semigroup $\{\mathbb{N}\}$, we may choose a $\mathfrak{P}$-semigroup $\mathcal{U}$ and an $i$ so that $A_i$ is large for $\mathcal{U}$.  We will now place sets $F$ in $\mathcal{T}$, ensuring that whenever $F\in\mathcal{T}$, we have a $\mathfrak{P}$-semigroup $\mathcal{U}_F$ such that $\bigcap_{n\in FS(F)\cup\{0\}}A_i-n$ is large for $\mathcal{U}_F$.
  
  We start by placing $\emptyset$ in $\mathcal{T}$, and we have $\mathcal{U}_\emptyset=\mathcal{U}$.  Now suppose $F\in\mathcal{T}$ and set $A'=\bigcap_{n\in FS(F)\cup\{0\}}A_i-n$.  By the preceding theorem, there is a $\mathfrak{P}$-semigroup $\mathcal{U}'$ extending $\mathcal{U}_F$ such that $\{n\in A'\mid A'\cap (A'-n)\text{ is large for }\mathcal{U}'\}$ belongs to $\mathfrak{P}$.  We place $F\cup\{n\}$ in $\mathcal{T}$ for each such $n$, and set $\mathcal{U}_{F\cup\{n\}}=\mathcal{U}'$.
\end{proof}

As noted above, we have not used any special properties of $\mathbb{N}$ beyond being a countable abelian semigroup with identity; moreover, it is trivial to add an identity to a semigroup.  So we have:
\begin{theorem}
  Let $\mathbb{S}$ be a countable abelian semigroup and let $\mathfrak{P}$ be a shift-invariant divisible property on $\mathbb{S}$.  Let $\mathbb{S}=A_1\cup\cdots\cup A_r$.  There are an $i\leq r$ and a collection $\mathcal{T}$ of finite sets of elements of $\mathbb{S}$ such that:
  \begin{itemize}
  \item $\emptyset\in\mathcal{T}$
  \item If $F\in\mathcal{T}$, $\{s\mid F\cup\{s\}\in\mathcal{T}\}$ belongs to $\mathfrak{P}$
  \item If $F\in\mathcal{T}$ then $FS(F)\subseteq A_i$
  \end{itemize}
\end{theorem}

\bibliographystyle{plain}
\bibliography{../../Bibliographies/main}
\end{document}